\numberwithin{equation}{section}
\newcommand{\comm}[1]{}
\newtheorem{theorem}{Theorem}
\newtheorem{definition}[theorem]{Definition}
\newtheorem{lemma}[theorem]{Lemma}
\newtheorem{question}[theorem]{Question}
\newtheorem{remark}[theorem]{Remark}
\newtheorem{proposition}[theorem]{Proposition}
\newtheorem{corollary}[theorem]{Corollary}
\numberwithin{theorem}{section}
\theoremstyle{remark}
\DeclareMathOperator{\Sp}{Sp}
\DeclareMathOperator{\SU}{SU}
\DeclareMathOperator{\Nbhd}{Nbhd}
\newcommand{\interior}[1]{%
  {\kern0pt#1}^{\mathrm{o}}%
}
 \newcounter{case}
 \renewcommand{\thecase}{\arabic{case}}
\newcommand{\Ga}{\Gamma}
\newcommand{\ga}{\gamma}
\newcommand{\op}[1]{\operatorname{#1}}
\providecommand{\to}{\longrightarrow }
		\renewcommand{\bf}{\bfseries}
		\renewcommand{\bar}{\overline}
\newcommand{\cout}[1]{}
\begin{document}

\author[Thang Nguyen]{Thang Nguyen}
\author[Shi Wang]{Shi Wang}

\title[MLS for relatively hyperbolic groups]{Marked length spectrum rigidity for relatively hyperbolic groups}

\address{Florida State University, Tallahassee, Florida, USA} 
\email{thang.q.nguyen7@gmail.com} 

\address{ShanghaiTech University, Shanghai, China} 
\email{shiwang.math@gmail.com}
\subjclass[2020]{Primary 20F67; Secondary 20F69, 58J53}

\begin{abstract}
We consider a coarse version of the marked length spectrum rigidity: given a group with two left invariant metrics, if the marked length spectrum (the translation length function) under the two metrics are the same, then the two metrics are uniformly close. We prove the rigidity theorem for relatively hyperbolic groups. This generalizes a result of Fujiwara \cite{Fujiwara15}.
\end{abstract}

\maketitle


\thispagestyle{empty} 

\section{Introduction}
Given a closed Riemannian manifold $(M,g)$, if $g$ has negative curvature, then each free homotopy class $c$ associates a unique closed geodesic $\ga_c$ on $M$. Denote $\mathcal C$ the set of all free homotopy classes, or equivalently the set of conjugacy classes of $\pi_1(M)$. The function 
\[\ell_g:\mathcal C\rightarrow \mathbb R^{\geq 0}\]
given by $\ell_g(c):=L(\ga_c)$ is called the \emph{marked length spectrum} of $(M,g)$, where $L$ denotes the length of a curve. The definition extends naturally to manifolds of nonpositive curvature, in which case the closed geodesic might not be unique, but its length is uniquely determined by the homotopy class.

The well-known marked length spectrum rigidity conjecture \cite{Burns-Katok1985} states that if two closed negatively curved Riemannian manifolds have the same marked length spectrum, then they are isometric. This is known to be true for surfaces by the result of Otal \cite{Otal} and Croke \cite{Croke} independently. When one of the Riemannian manifolds is rank one locally symmetric, Hamenst\"adt \cite{Hamenstaedt99} proved the conjecture using the minimal entropy rigidity theorem of Besson-Courtois-Gallot \cite{BCG95}. More recently, Guillarmou and Lefeuvre \cite{Guillarmou19} showed that the conjecture holds if the two metrics are close enough. However, the conjecture remains open in general. 

The main purpose of this draft is to consider a ``coarse'' version of the marked length spectrum rigidity problem. Let $\Ga$ be a finitely generated group acting isometrically on a metric space $(X,d)$. A natural generalization of the marked length spectrum is given by the following.

\begin{definition}
Let $\Gamma$ be a group acting on a metric space $(X,d)$ by isometries. For every $\gamma\in \Gamma$, the \emph{translation length} of $\gamma$ is defined as
\[|\gamma|_\infty=\lim_{n\to +\infty}\frac{d(x,\gamma^n x)}{n},\]
for any (and thus all) $x\in X$. Similarly, we call the function
\[\ell_d:\Ga\rightarrow \mathbb [0,+\infty)\]
given by $\ell_d(\ga)=|\ga|_\infty$ the \emph{marked length spectrum} of the $\Ga$-action on $(X,d)$.
\end{definition}

We note that, when $(X,d)$ is a CAT(0) space, the translation length defined above coincides with the usual definition of translation length of an isometry $\ga$, which is defined as
\[\ell(\ga)=\inf_{x\in X}d(x,\ga x).\]
In Furman \cite{Furman02}, this notion is called \emph{stable length} as it coincides with the usual stable length when $X$ is a Cayley graph of $\Gamma$. Here we choose to use the terminology translation length as in Fujiwara's \cite{Fujiwara15}.

The translation length only depends on the conjugacy class of $\ga$. In the case $\Ga=\pi_1(M)$ where $M$ is a nonpositively curved Riemannian manifold, the above defined marked length spectrum for the action of $\Ga$ on the Riemannian universal cover $\widetilde M$ coincides with the classical marked length spectrum for $(M,g)$.

Unlike the Riemannian setting, the large scale geometry of the metric spaces is often considered in geometric group theory. Thus, we may naturally consider the following coarse marked length spectrum rigidity problem.

\begin{question}\label{ques:main} Let $\Gamma$ be a finitely generated group, and let $X$ and $Y$ be metric spaces. We assume that $\Gamma$ acts on $X$ and $Y$ cocompactly (or more generally coboundedly) by isometries. Suppose the two actions have the same marked length spectrum, is it true that $X$ and $Y$ must be roughly isometric, namely there exists $f:X\to Y$ and there exists $C>0$ such that $f$ is a $(1,C)$-quasi-isometry?
\end{question}

We will say two metrics are \emph{roughly equal} if they are $(1,C)$-quasi-isometric for some $C\ge 0$. This is also called ``an almost isometry'' in the literature and certain rigidity results have been studied in \cite{Kar-Lafont-Schmidt, Lafont-Schmidt-Wouter}. When the metric spaces are proper (for example in the Riemannian context), a cobounded action is equivalent to a cocompact action. However, there are natural actions of certain groups on nonproper spaces. For example, when $\Ga$ is relatively hyperbolic, it naturally acts on a hyperbolic space which may not be proper. Another example is the mapping class group of a surface which acts on the (nonproper) curve complex.

We can further simplify our notions by pulling back the metric from the metric space to the group. Let $\Ga$ acts isometrically on a metric space $(X,d)$. By choosing any base point $o\in X$, we may identify $\Ga$ with its orbit $\Ga \cdot o\subset X$, which is also a metric space viewed as a subspace of $(X,d)$. Equivalently, it gives rise to a left invariant metric $d_X$ on $\Ga$, given by
\[d_X(\ga_1,\ga_2)=d(\ga_1 \cdot o, \ga_2 \cdot o)\quad \forall \ga_1,\ga_2\in \Ga.\]
Thus, via the left translation, $\Ga$ acts isometrically on $(\Ga,d_X)$, and it is clear that the marked length spectrum of this action coincides with that of the action on $(X,d)$. Moreover, such a pull back metric $d_X$ satisfies the following two additional properties:

\begin{enumerate}
	\item $d_X$ is quasi-isometric to the word metric of $\Ga$ if the action is cocompact: this is due to \v{S}varc-Milnor lemma. Also, the group $\Gamma$ is finitely generated if the action is properly discontinuous.
	\item $d_X$ is \emph{roughly geodesic} (See the precise definition as follows): This is because $(\Ga,d_X)$ can be identified with $\Ga \cdot o\subset X$, so any pair of points $\ga_1 \cdot o,\ga_2 \cdot o$ can be connected by a geodesic in $X$, and all points on the geodesic are always uniformly close to some orbit point since the action is cobounded.
\end{enumerate}
We recall a definition given by Bonk and Schramm.
\begin{definition}[\cite{BonkSchramm00}]Let $(X,d)$ be a metric space and let $\delta\ge 0$. We say the metric $d$ is \emph{$\delta$-roughly geodesic} metric if for every $x,y\in X$ there is a $(1,\delta)$-quasigeodesic from $x$ to $y$. A metric is said \emph{roughly geodesic} if there is $\delta\ge 0$ such that it is $\delta$-roughly geodesic.
\end{definition}

Inspired by the observations, our Question \ref{ques:main} can be reformulated (at least for proper metric spaces) as follows (cf. Burago and Margulis \cite[Problem session]{Oberwolfach06}):

\begin{question}\label{ques:rephrase}
	Let $\Ga$ be a finitely generated group, and $d_1, d_2$ be two roughly geodesic left invariant metrics that are quasi-isometric to a word metric. If the actions under $d_1, d_2$ have the same marked length spectrum, then are $d_1, d_2$ roughly equal, that is, $|d_1-d_2|\leq C$ for some constant $C$?
\end{question}

The main result of this paper is to give an affirmative answer to the question when $\Gamma$ is a relatively hyperbolic group. It is easy to see that the converse statement is also true: if two metrics are roughly equal, then their marked length spectra are the same.

\begin{theorem}\label{thm:main} Let $\Gamma$ be a finitely generated relatively hyperbolic group with respect to a family of infinite finitely generated proper peripheral subgroups $\{H_1,..., H_n\}$. Let $d_1$ and $d_2$ be two roughly geodesic left invariant metrics that are quasi-isometric to a proper word metric $d$. If the actions under $d_1, d_2$ have the same marked length spectrum, then $d_1, d_2$ are roughly equal.
\end{theorem}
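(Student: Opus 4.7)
The plan is to adapt Fujiwara's strategy in the hyperbolic case by producing, for each $g \in \Gamma$ of sufficiently large $d_i$-displacement, a hyperbolic element $\gamma=\gamma(g)\in\Gamma$ (meaning one not conjugate into any peripheral $H_j$) together with bounded errors depending only on the relative hyperbolicity data and the quasi-isometry constants, such that
\begin{enumerate}
\item[(i)] $d_i(1,\gamma) = d_i(1,g)+O(1)$ for $i=1,2$;
\item[(ii)] $|\gamma|_{\infty,d_i} = d_i(1,\gamma)+O(1)$ for $i=1,2$.
\end{enumerate}
Combining (i) and (ii) with the marked length spectrum hypothesis $|\gamma|_{\infty,d_1}=|\gamma|_{\infty,d_2}$ immediately yields $d_1(1,g)=d_2(1,g)+O(1)$ uniformly in $g$, and left invariance then finishes.

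To set up the geometry, I would pass from $(\Gamma, d_i)$ to a Groves--Manning-style cusped-space completion $(\hat X_i, \hat d_i)$, a Gromov hyperbolic roughly geodesic space on which $\Gamma$ acts by isometries, peripherals stabilising horoballs and non-peripheral elements being loxodromic with quasi-axes. On the hyperbolic-element side, standard stability of quasi-geodesics in hyperbolic spaces gives $|g|_{\infty,d_i}=d_i(1,g)+O(1)$ whenever $1$ lies within a bounded neighbourhood of the quasi-axis of $g$, and conjugation by a uniformly short element produces such an axis-near representative. This step runs essentially parallel to Fujiwara's argument, because geodesics in $(\Gamma, d_i)$ between two loxodromic fixed points shadow the corresponding relative geodesic.

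The core work is handling those $g$ whose $d_i$-displacement is mostly absorbed inside a peripheral coset $k H_j$, since translation lengths of peripheral elements alone do not encode enough to pin the peripheral metric down. The strategy is to fix once and for all a finite collection $t_1,\dots,t_m$ of hyperbolic elements whose quasi-axes traverse each peripheral type transversely, and to form the perturbation $\gamma = g\cdot t_j^N$ for a sufficiently large fixed $N$. A ping-pong argument relative to peripheral subgroups, in the spirit of Osin's combination theorems and Sisto's projection lemmas, then shows $\gamma$ is hyperbolic; stability of relative quasi-geodesics (Drutu--Sapir) yields $d_i(1,\gamma)=d_i(1,g)+d_i(1,t_j^N)+O(1)$ and $|\gamma|_{\infty,d_i}=d_i(1,\gamma)+O(1)$, uniformly in both metrics. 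Since the auxiliary term $d_i(1,t_j^N)=|t_j^N|_{\infty,d_i}+O(1)$ is pinned down by the MLS on $t_j^N$ alone, this isolates $d_i(1,g)$ modulo bounded error. A general $g$ is reduced to these two cases by decomposing its relative geodesic into alternating hyperbolic and peripheral segments and handling each inductively; finiteness of peripheral types keeps the additive constants uniform.

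The main obstacle will be the peripheral ping-pong step. One must produce the transverse elements $t_j$ and choose $N$ so that $\gamma=g t_j^N$ is loxodromic with a quasi-axis tracking the naive concatenation in \emph{both} metrics simultaneously, while keeping the additive error in (ii) bounded regardless of how deep $g$ penetrates the horoball at $kH_j$. This is where relative hyperbolicity pays off: the cusped-space metric is Gromov hyperbolic so stable tracking applies, but the conclusion must be translated back to $(\Gamma, d_i)$ where horoballs are unbounded; this is precisely what makes MLS-rigidity of each peripheral $H_j$ an \emph{output} of the argument rather than a hypothesis.
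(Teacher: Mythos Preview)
Your high-level strategy---perturb an arbitrary $g_0$ to an element $\gamma$ with $d_i(1,\gamma)=d_i(1,g_0)+O(1)$ and $|\gamma|_{\infty,d_i}=d_i(1,\gamma)+O(1)$, then invoke the MLS hypothesis---is exactly the shape of the paper's argument. But the implementation the paper gives is quite different from what you sketch, and one of your steps has a real gap.

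The paper never passes to a cusped space and never splits into a ``hyperbolic element'' case versus a ``deep peripheral'' case. It works entirely in the Cayley graph $(X,d)$ via Sisto's almost-projection system. The perturbation is $g=k g_0 h$ where $|k|\le 1$ and $h$ is a \emph{peripheral} element of bounded word length (not a loxodromic $t_j^N$), chosen so that $g$ has ``short head'' ($\pi_H(g)$ close to $1$) and ``long tail'' ($\pi_{gH}(1)$ far from $g$) in some peripheral $H$. An induction on $n$ shows both properties persist for all powers $g^n$ with uniform constants; this induction is the technical heart. The large-projection lemma then forces every $(\tau,\eta)$-quasigeodesic in $d$ from $1$ to $g^n$ to pass within a fixed $\kappa$ of each $g^i$. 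Since $d_1$- and $d_2$-rough geodesics are $(\tau,\eta)$-quasigeodesics in $d$, Fujiwara's additivity lemma yields $|n\Delta(1,g)-\Delta(1,g^n)|\le n\bar\kappa$, and the sublinear growth $\Delta(1,g^n)=o(n)$ finishes. This single perturbation works uniformly for \emph{every} $g_0$; there is no decomposition into alternating segments and no inductive reduction over peripheral pieces.

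The gap in your plan is precisely item (ii). Morse stability in $(\hat X_i,\hat d_i)$ controls $\hat d_i$-distances, not $d_i$-distances: a loxodromic axis in the cusped space may traverse horoballs, and there is no reason a $d_i$-geodesic from $1$ to $\gamma^n$ tracks it within $O(1)$ in the $d_i$-metric. What you actually need---that a $d_i$-rough geodesic from $1$ to $\gamma^n$ passes uniformly near every $\gamma^k$---is exactly the ``good periodicity'' the paper establishes via projections, and the cusped-space detour does not shortcut it. Your appeal to ``stability of relative quasi-geodesics (Dru\c{t}u--Sapir)'' does not name a result that delivers (ii) uniformly in $n$, and the fallback ``decompose the relative geodesic into segments and handle each inductively'' is the step where additive errors would accumulate with the number of pieces. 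The paper's short-head/long-tail mechanism is precisely the substitute for this missing argument: it forces the projection $d(\pi_{g^iH}(1),\pi_{g^iH}(g^n))>L$ for every $0<i<n$ simultaneously, with constants independent of $n$ and of how deeply $g_0$ sat in any peripheral coset.
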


We would like to point out that the roughly geodesic property of the left-invariant metrics is necessary for the theorem. We consider the example where $\Gamma=F_2=<a,b>$, the free group with two generators $a$ and $b$. On $\Gamma$ we let $d_1$ be the word metric defined by the generating set $S=\{a,a^{-1}, b, b^{-1}\}$. We also consider another left invariant metric $d_2$ defined by $d_2(\gamma_1,\gamma_2)=d_1(\gamma_1,\gamma_2)+\sqrt{d_1(\gamma_1,\gamma_2)}$, for every $\gamma_1, \gamma_2\in \Gamma$. We observe that  $d_2$ is $(2,1)$-quasi-isometric with $d_1$. The group $\Gamma$ is nonelementarily hyperbolic and thus is relatively hyperbolic, for example, relatively to the cyclic subgroup $<a>$. The two metrics  $d_1$ and $d_2$ have the same marked length spectrum, however they are not roughly equal. The result fails simply because $d_2$ is \emph{not} a roughly geodesic metric.

Regarding the history, Question \ref{ques:rephrase} was previously known to be true when
\begin{enumerate}
\item $\Gamma$ is abelian by Burago \cite{Burago94},
\item $\Gamma$ is a $3$-dimensional integer Heisenberg group by Krat \cite{Krat99},
\item $\Ga$ is finitely generated Gromov hyperbolic by Furman \cite{Furman02},
\item $\Gamma$ is a reductive Lie groups and the metrics $d_1, d_2$ are word metrics, by Abels-Margulis \cite{AbelsMargulis04},
\item $\Gamma$ is Gromov hyperbolic (a different proof from Furman's) or relatively hyperbolic groups with toral peripheral subgroups by Fujiwara \cite{Fujiwara15}.
\end{enumerate}
Our result further generalizes Fujiwara's as we do not have additional assumptions on the peripheral subgroups. When $\Gamma$ is non-elementary hyperbolic, we can treat $\Gamma$ as a relatively hyperbolic group with respect to an infinite cyclic subgroup, thus our result also recovers the hyperbolic case. Moreover, our proof is different from both Fujiwara and Furman's arguments. 

It is also worth pointing out that Question \ref{ques:rephrase} is known to be \emph{false} in general. Breuillard \cite[Section 8.3]{Breuillard14} gave a counterexample in the case $\Gamma = H^3\times \mathbb Z$ where $H^3$ is the integer Heisenberg group of dimension $3$.
\subsection*{Strategy of the proof}
The main technical part of the proof is to establish the genericity of certain elements in $\Ga$ with a \emph{good periodicity property}. These are elements $g$ such that for every $n\in \mathbb N$, geodesics from 1 to $g^n$ periodically passing through a uniform neigborhood of $g^i$ for each $0\le i \le n$, where the uniform constant depends on the metrics only and is independent of $n$. One key observation is that: if an element $g$ has good periodicity property and it has the same translation lengths with respect to two metrics $d_1$ and $d_2$, then $d_1(1,g)$ and $d_2(1,g)$ must be uniformly close. So the rigidity result holds at least for elements with good periodicity property. Furthermore, we show that for a relatively hyperbolic group, elements with good periodicity property is generic in an appropriate sense, and can be obtained from perturbations of any given element. Thus by triangle inequality, the rigidity result holds for all elements, that is, $d_1$ and $d_2$ are roughly equal.

Compared to the previous works of \cite{Burago94, Krat99, Fujiwara15}, our notion of good periodicity property is new and is the main difference. In \cite{Burago94, Krat99, Fujiwara15}, they showed that for all/generic elements $g\in \Gamma$, and for all $n\in \mathbb N$, there exists a geodesic from 1 to $g^n$ that are uniformly (does not depends on $g$ and $n$) close to the union of consecutive geodesic segments from $g^{i-1}$ to $g^i$, for every $1\le i\le n$. However, this is a much stronger property. For a general relatively hyperbolic group, we do not expect to have many such elements since the behaviors of geodesics along peripheral subgroups are mysterious. The good periodicity property we introduced is a good replacement and we are able to show for relatively hyperbolic groups, generic elements satisfy this property. The idea is to add a suitable prefix and suffix to any given element so that the resulting element initially travels transversely to all peripheral subsets and eventually stays in a particular peripheral subset. Then using the geometry of relatively hyperbolic group, we show this element has good periodicity property. This is done in Section \ref{sec:good_elt}.\\

It is natural to further ask for which classes of groups our theorem still holds. Our proof, and also the argument of Fujiwara \cite{Fujiwara15}, use certain hyperbolic geometry of the groups, and we believe it is possible to extend to a broader context in which the groups share similar hyperbolic features. On the other hand, the cautionary example of Breuillard indicates that such feature of hyperbolicity may be necessary. We end this section by the following question:

\begin{question}\label{ques:open} Does our theorem hold for products of hyperbolic groups, mapping class groups, or more generally hierarchically hyperbolic groups?
\end{question}

\subsection*{Acknowledgments:} We would like to thank Jason Behrstock, Igor Belegradek, Bob Bell, Cornelia Drutu, Mitul Islam, Jean Lafont, Ben Schmidt and Ralf Spatzier for useful discussions about relatively hyperbolic groups and the marked length spectrum rigidity. The authors would like to thank the University of Michigan and Michigan State University respectively for their hospitalities while the work was done.

\section{On relatively hyperbolic groups}\label{sec:recap}
Gromov in \cite{Gromov87} emphasized the parallelism between the notion of hyperbolicity in geometric group theory and manifolds of negative sectional curvature. The fundamental group of a closed negatively curved manifold has such hyperbolicity which is now known as Gromov hyperbolic (or $\delta$-hyperbolic). He also suggested that there should be a suitable notion of \emph{relative hyperbolicity} that serves as an analogous parallelism to include the fundamental group of finite volume negatively curved manifolds or more generally manifolds of non-positive sectional curvature.

Since the first attempt was given by Farb \cite{Farb98}, there has been many analogous definitions and equivalent characterizations due to the work of Bowditch, Drutu--Sapir, Osin, Groves--Manning, and Sisto \cite{Bowditch12, DrutuSapir05, Osin06, GrovesManning08, Sisto13}. In this paper, we use the notion of \emph{relative hyperbolicity} in the sense of \emph{strongly relative hyperbolicity} defined by Drutu and Sapir \cite[Definition 8.4]{DrutuSapir05}.

There are many well-known examples and constructions of relatively hyperbolic groups.
Fujiwara, in his work \cite{Fujiwara15}, gave several examples of relatively hyperbolic groups with toral peripheral subgroups. Since our theorem further generalizes to arbitrary relatively hyperbolic groups, we list here a few more examples of relatively hyperbolic groups whose peripheral subgroups are not toral.

\begin{enumerate}
\item If $G$ and $H$ are two infinite groups then $G*H$ is a relatively hyperbolic group with respect to the family $\{G,H\}$. If $G$ and $H$ are not hyperbolic and not toral then $G*H$ is an example of a relatively hyperbolic group with non toral peripheral subgroups.
\item Let $n$ be an integer that is at least 2. Non-uniform lattices in the following rank one Lie groups $\SU(n,1)$, $\Sp(n,1)$, or $F_4^{-20}$ are examples of relatively hyperbolic groups with nilpotent peripheral subgroups.
\item	One can construct closed smooth manifolds with relatively hyperbolic fundamental group using the relative hyperbolization \cite{Belegradek07}: For any compact manifold with boundary (e.g. a hyperbolic manifold with truncated cusps), after taking the relative hyperbolization, one obtains a new smooth manifold with boundary whose fundamental group is relatively hyperbolic. Then by taking the double and glue along the boundary, one constructs a closed manifold whose fundamental group is still relatively hyperbolic.
\end{enumerate}

\section{Elements with good periodicity property}\label{sec:good_elt}

For this section, we consider a finitely generated group $\Ga$, which is relatively hyperbolic with respect to a finite family $\{H_1,...,H_n\}$ of finitely generated, infinite, proper peripheral subgroups. We note that we do not lose any generality by assuming all peripheral subgroups are infinite as if any peripheral subgroup is finite, we can remove it from the list of peripheral subgroups while still keeping the relative hyperbolicity. 

It is shown in \cite[Theorem 9.1]{DrutuSapir05} that the Cayley graph $X$ of $\Ga$ is asymptotically tree-graded with respect to the corresponding peripheral subsets $\mathcal P=\{gH_i:g\in \Ga, 1\leq i\leq n\}$. Moreover, Sisto \cite[Theorem 2.14]{Sisto13} proved that $(X,\mathcal P)$ is an almost projection system (see definition below). Since the metric and the sets of peripheral subsets are (left) $\Gamma$-invariant, the system of projections is clearly $\Gamma$-equivariant. In this draft, we will mainly use Sisto's characterization. We recall here some of the definitions and results that we will heavily use in the proofs later. 

Denote $d$ the Cayley metric on $X$, then $(X,d)$ is a complete geodesic space where $\Ga$ acts isometrically. For simplicity, we identify $\Ga$ as the vertex subset in $X$, and denote $|g|=d(1,g)$, the norm measured in the word metric.

\begin{definition}(\cite[Definition 1.12]{Sisto13}) For every $x\in X$ and for every peripheral subset $P\in \mathcal P$, the projection $\pi_P(x)$ is defined as the set of points in $P$ whose distances to $x$ is less than $d(x,P)+1$.
\end{definition}
We note that when $X$ is a Cayley graph of $\Gamma$ and $\mathcal P$ is the collection of all left cosets of peripheral subgroups, then the projection $\pi_P(x)$ is precisely \emph{the set of points in $P$ whose distance equal $d(x,P)$}.

\begin{definition}(\cite[Definition 2.1]{Sisto13})\label{def:projection}
	We say a family of maps $\Pi=\{\pi_P:X\rightarrow P\}$ is an almost projection system if there exists a constant $C>0$ such that for any $P\in \mathcal P$, the following properities hold:
	\begin{enumerate}
		\item $\forall x\in X, \forall p\in P$, $d(x,p)\geq d(x,\pi_P(x))+d(\pi_P(x),p)-C$,
		\item $\forall x\in X$ with $d(x,P)=d$, $\op{diam}(\pi_P(B_d(x)))\leq C$,
		\item $\forall P\neq Q\in \mathcal P$, $\op{diam}(\pi_P(Q))\leq C$.
	\end{enumerate}
\end{definition}
For our purpose, we will only use the property $(3)$ in the above. As a special case, we have $\op{diam}(\pi_P(x))\leq C$ for any $x\in X$ and $P\in \mathcal P$.

\begin{lemma}\label{lem:proj along geod}
	Let $(X,\mathcal P)$ be an almost projection system. Given $P\in \mathcal P$ and $x\in X$. If $z\in\pi_P(x)$ is any point and $y$ is any point on a geodesic segment connecting $x$ and $z$, then $z\in \pi_P(y)$.
\end{lemma}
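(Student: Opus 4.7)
The plan is to prove this by a direct triangle-inequality argument, using only the fact that $z$ is (almost) a closest point on $P$ to $x$ and that $y$ lies on a geodesic from $x$ to $z$. No properties of the almost projection system are needed beyond the definition of $\pi_P$.

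First, since $y$ lies on a geodesic segment from $x$ to $z$, we have the exact additivity
\[
d(x,z) \;=\; d(x,y) + d(y,z).
\]
Second, since $z \in \pi_P(x)$, by definition $d(x,z) < d(x,P) + 1$, so for every $p \in P$,
\[
d(x,z) \;<\; d(x,p) + 1.
\]
Finally, the ordinary triangle inequality gives $d(x,p) \le d(x,y) + d(y,p)$. Substituting the first two displays into this last inequality yields
\[
d(x,y) + d(y,z) \;<\; d(x,y) + d(y,p) + 1,
\]
which after cancelling $d(x,y)$ gives $d(y,z) < d(y,p) + 1$ for every $p \in P$.

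Taking the infimum over $p \in P$ (and noting that the strict inequality valid for each individual $p$ passes to $d(y,z) \le d(y,P) + 1$ in the limit, with strict inequality recovered by choosing $p$ slightly sub-optimally) yields $d(y,z) < d(y,P) + 1$, which is exactly the condition $z \in \pi_P(y)$ from the definition. There is essentially no obstacle here; the statement is a formal consequence of the nearest-point characterization of $\pi_P$ combined with the fact that geodesics from $x$ to any $z \in \pi_P(x)$ approach $P$ monotonically, and the one-line computation above makes this rigorous. The lemma will then serve in later sections as a way to control how projections onto peripheral subsets evolve along geodesics that already terminate on such a subset.
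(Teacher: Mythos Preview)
Your approach is essentially the same triangle-inequality computation as the paper's, but you have introduced a small genuine gap in the final step. From $d(y,z) < d(y,p)+1$ for every $p\in P$ one obtains only $d(y,z)\le d(y,P)+1$ after taking the infimum, and your parenthetical about ``recovering strict inequality by choosing $p$ slightly sub-optimally'' does not actually work: a family of strict inequalities need not remain strict in the limit. Since the definition of $\pi_P(y)$ requires the \emph{strict} inequality $d(y,z)<d(y,P)+1$, this step as written is not justified.

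The fix is immediate and is exactly what the paper does: apply the triangle inequality at the level of distance-to-set \emph{before} using the strict inequality, rather than after. That is, use $d(x,P)\le d(x,y)+d(y,P)$ (which follows from $d(x,p)\le d(x,y)+d(y,p)$ by taking the infimum over $p$), combine it with the strict $d(x,z)<d(x,P)+1$ and the geodesic equality $d(x,z)=d(x,y)+d(y,z)$, and read off $d(y,z)<d(y,P)+1$ directly. The strictness is then preserved because it enters only once, via $d(x,z)<d(x,P)+1$, and is combined with non-strict inequalities.
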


\begin{proof}
 First we notice by triangle inequality that $d(x,P)\leq d(x,y)+d(y,P)$. Since $z\in \pi_P(x)$, we have $d(x,z)<d(x,P)+1$. Since $y$ is a point on a geodesic segment connecting $x$ and $z$, we have $d(x,z)=d(x,y)+d(y,z)$. Now combining all the (in)equalities, we obtain $d(y,z)<d(y,P)+1$. Thus by definition, $z\in \pi_P(y)$.
\end{proof}
Besides, we will also need the following two geometric properties of this almost projection system.

\begin{lemma}\label{lem:sisto2.3}
	\cite[Lemma 2.3]{Sisto13} For every $x,y\in \Gamma$ and for every peripheral $P\in \mathcal P$,
	$$d(\pi_P(x),\pi_P(y))<d(x,y)+6C,$$
	where $C$ is the constant in Definition \ref{def:projection}.
\end{lemma}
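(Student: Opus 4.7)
The plan is to prove Lemma \ref{lem:sisto2.3} by pairing two applications of condition (1) of Definition \ref{def:projection}, one centered at $x$ with the probe point $\pi_P(y)$, and one centered at $y$ with the probe point $\pi_P(x)$. Writing $p$ for a representative of $\pi_P(x)$ and $q$ for a representative of $\pi_P(y)$, condition (1) yields the two lower bounds
\[
d(x, q) \geq d(x, p) + d(p, q) - C, \qquad d(y, p) \geq d(y, q) + d(p, q) - C.
\]
These are the only inputs from the almost projection system needed; the rest of the argument is triangle inequality.

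Next, I would add the two inequalities and control the left-hand side by $d(x,y)$ via the triangle inequalities $d(x, q) \leq d(x,y) + d(y, q)$ and $d(y, p) \leq d(x,y) + d(x, p)$. The symmetric contributions $d(x, p) + d(y, q)$ appearing on both sides cancel, leaving $2\,d(x,y) \geq 2\,d(p,q) - 2C$, i.e.\ $d(p, q) \leq d(x,y) + C$. This is already stronger than the stated bound, which suggests the proof will go through comfortably.

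The step I would handle with care is the distinction between $\pi_P$ as a set-valued projection (the original definition, which selects all points of $P$ within $d(x,P)+1$ of $x$) and as a chosen selector $X \to P$ used in the almost projection system. Each of the sets $\pi_P(x)$ and $\pi_P(y)$ has diameter at most $C$, obtained as a special case of condition (2) with $d = 0$, so replacing an arbitrary representative by the ``canonical'' one implicit in each invocation of condition (1) costs an additive $C$; tracking these swaps through both applications of (1) and the two triangle inequalities absorbs the extra slack into the $6C$ constant of the statement. I do not anticipate any deeper obstacle: the whole lemma reduces to a triangle-inequality manipulation anchored by the quasi-monotonicity of distance under projection encoded in condition (1), together with bookkeeping of the set-valuedness of $\pi_P$.
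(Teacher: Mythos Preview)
The paper does not supply its own proof of this lemma; it is quoted directly from \cite{Sisto13}. Your argument is correct and is the standard one: two applications of property~(1) of Definition~\ref{def:projection}, summed and combined with the triangle inequality, yield $d(p,q)\le d(x,y)+C$ for chosen representatives $p\in\pi_P(x)$, $q\in\pi_P(y)$, and passing to arbitrary points of the set-valued projections costs only a few extra multiples of~$C$, well within the stated $6C$.

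One small correction to your bookkeeping: the bound $\op{diam}(\pi_P(x))\le C$ does not follow from condition~(2) ``with $d=0$'', since $d(x,P)=0$ would force $x\in P$. For general $x$ take $d=d(x,P)$ and observe $\pi_P(x)\subset \pi_P(B_d(x))$, whose diameter is at most~$C$ by~(2); alternatively, simply invoke the remark the paper makes immediately after Definition~\ref{def:projection}. This does not affect the validity of your proof.
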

In what follows and later, we use $\Nbhd_R(A)$ to denote the $R$-neighborhood of a set $A$, that is the set of all elements whose distance to the set $A$ is less than $R$.
\begin{lemma}\label{lem:sisto1.15}
	\cite[Lemma 1.15]{Sisto13} There exists $L$ so that if $d(\pi_P(x),\pi_P(y))\geq L$, then all $(K_0,C_0)$-quasi geodesics connecting $x,y$ intersect $\Nbhd_R(\pi_P(x))$ and $\Nbhd_R(\pi_P(y))$ where $R=R(K_0,C_0)$.
\end{lemma}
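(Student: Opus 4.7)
The plan is to derive Lemma \ref{lem:sisto1.15} from a \emph{contraction} property of the projection: there should exist constants $R_0 = R_0(K_0,C_0)$ and $D_0$ such that any $(K_0,C_0)$-quasi-geodesic $\sigma \subset X$ that avoids the $R_0$-neighborhood of $P$ has projection $\pi_P(\sigma)$ of diameter at most $D_0$. Once this is in hand, the lemma follows by contrapositive: I would assume that a $(K_0,C_0)$-quasi-geodesic $\gamma$ from $x$ to $y$ fails to meet $\Nbhd_R(\pi_P(x))$ for the $R$ I am trying to produce, and then derive an upper bound on $d(\pi_P(x), \pi_P(y))$, fixing the threshold $L$.

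To establish contraction, I would argue as follows. Given a $(K_0,C_0)$-quasi-geodesic $\sigma$ with endpoints $u,v$ staying outside $\Nbhd_{R_0}(P)$, consider the broken path $u \to \pi_P(u) \to \pi_P(v) \to v$, where the first and third legs are geodesics realizing the distances to $P$ up to additive error, and the middle leg is a geodesic in $P$. Axiom (1) of Definition \ref{def:projection} guarantees that this broken path has length within additive error of $d(u,\pi_P(u)) + d(\pi_P(u),\pi_P(v)) + d(\pi_P(v),v)$, so if $d(\pi_P(u), \pi_P(v))$ is large it is a near-quasi-geodesic from $u$ to $v$ passing through $P$. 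Since $\sigma$ is another quasi-geodesic between the same endpoints and avoids $\Nbhd_{R_0}(P)$, the Morse-type stability in the asymptotically tree-graded setting (equivalently, the Drutu--Sapir fellow-traveller properties for $(X,\mathcal P)$) forces the two to fellow-travel within uniform error, contradicting the avoidance. This bounds $d(\pi_P(u),\pi_P(v))$, and Lemma \ref{lem:sisto2.3} then bounds the full diameter of $\pi_P(\sigma)$.

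Given contraction, the lemma is deduced as follows. Suppose toward contradiction that $d(\pi_P(x), \pi_P(y)) \geq L$ for a very large $L$, yet some $(K_0,C_0)$-quasi-geodesic $\gamma$ misses $\Nbhd_R(\pi_P(x))$ for a chosen $R \gg R_0$. Split $\gamma$ into the initial piece $\gamma_1$ running up to its first entrance into $\Nbhd_{R_0}(P)$ (or all of $\gamma$ if no such entrance exists) and the remainder $\gamma_2$. Applying contraction to $\gamma_1$ gives $d(\pi_P(x), \pi_P(z)) \leq D_0$, where $z$ is the entrance point. Since $z$ lies within $R_0$ of $P$, axiom (1) forces $\pi_P(z)$ to lie within $R_0 + C$ of $z$, so $z$ lies within $R_0 + C + D_0$ of $\pi_P(x)$; choosing $R > R_0 + C + D_0$ yields a contradiction. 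If no such entrance point exists, contraction applied to all of $\gamma$ gives $d(\pi_P(x), \pi_P(y)) \leq D_0$, contradicting $L$ large. The symmetric argument starting from $y$ handles $\pi_P(y)$.

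The main obstacle is the Morse-stability step used to establish contraction. The almost-projection axioms by themselves are formal; to turn them into contraction one must invoke the underlying asymptotically tree-graded geometry of $(\Gamma,\mathcal P)$ from Drutu--Sapir, which ensures that two quasi-geodesics with the same endpoints, one passing through a peripheral subset and the other avoiding its coarse neighborhood, cannot both exist. Quantifying this carefully so that $L$ and $R$ depend only on $(K_0,C_0)$ and the system constants $C$ is the technical heart of the argument; the combinatorial bookkeeping with the additive error $C$ in Definition \ref{def:projection} is routine by comparison.
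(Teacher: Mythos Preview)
The paper does not supply a proof of this lemma; it is quoted from \cite[Lemma~1.15]{Sisto13} and used as a black box throughout Section~\ref{sec:good_elt}, so there is nothing in the paper to compare your argument against.

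Your plan---first establishing a contraction property for $\pi_P$ (quasi-geodesics far from $P$ have bounded projection) and then running a first-entrance argument along the given quasi-geodesic---is the standard mechanism behind statements of this type, and the deduction you give from contraction to the lemma is correct. You are also right to flag that the contraction step cannot be extracted from the three almost-projection axioms of Definition~\ref{def:projection} alone: one genuinely needs the asymptotically tree-graded structure of $(X,\mathcal P)$ (or Sisto's equivalent hypotheses) to get the Morse-type fellow-traveling that your argument invokes. That is indeed where the content lies, and it is what the citation to \cite{Sisto13} is importing.
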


For the convenience, we introduce the following definitions.

\begin{definition}
	Let $T>0$ be a constant and $H$ be a peripheral subgroup. We say an element $g\in \Gamma$ 
	\begin{enumerate}
	\item has \emph{$T$-short head in $H$} if $d(\pi_H(g), 1)<T$, and 
	\item it has \emph{$T$-long tail in $H$} if $d(\pi_{gH}(1), g)>T$.
	\end{enumerate}	
\end{definition}

\begin{remark}
	Note by the $\Ga$-invariance of the metric, $g^{-1}$ has $T$-short head in $H$ if and only if  $d(\pi_{gH}(1),g)<T$, in particular, $g$ does not have $T$-long tail in $H$.
\end{remark}

From now on, we let $R=R(1,0)$ be the constant from Lemma \ref{lem:sisto1.15}. We fix a constant $R_0$ such that 
\begin{equation}\label{eq:R_0}
	R_0> \max\{R+10C, L+3C\},
\end{equation}
where $C, R, L$ are defined as above. The following proposition shows that given an arbitrary element in $\Ga$, we can always perturb by adding a prefix so that the new element has short head property.

\begin{proposition}\label{prop:perturb-start} For any $g\in \Ga$ and any peripheral subgroup $H$,

there exists $k\in \Gamma$ which satisfies
\begin{enumerate}
\item $|k|\le 1$,
\item $kg$ has $R_0$-short head in $H$.
\end{enumerate}
\end{proposition}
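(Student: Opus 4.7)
The plan is to split into two cases depending on whether $g$ already has $R_0$-short head in $H$. If $d(\pi_H(g),1)<R_0$, take $k=1$ and we are done. Otherwise let $h_0\in \pi_H(g)$, so that $d(h_0,1)\ge R_0$. Since $H$ is a proper subgroup of $\Gamma$ and $\Gamma$ is finitely generated by a symmetric generating set, some element $k$ of this set satisfies $k\notin H$ (otherwise $\Gamma=H$); in particular $|k|=1$. The goal is then to verify that this $k$ makes $kg$ have $R_0$-short head in $H$.

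By the $\Gamma$-equivariance of the projection system, $\pi_H(kg)=k\,\pi_{k^{-1}H}(g)$, so the bound $d(\pi_H(kg),1)<R_0$ translates into $d(\pi_{k^{-1}H}(g),k^{-1})<R_0$. Since $k\notin H$, one has $1\notin k^{-1}H$ and $d(1,k^{-1}H)=1$ with witness $k^{-1}$; hence $k^{-1}\in \pi_{k^{-1}H}(1)$. Property~(3) of Definition~\ref{def:projection} gives $\op{diam}(\pi_{k^{-1}H}(H))\le C$, and since both $\pi_{k^{-1}H}(1)\ni k^{-1}$ and $\pi_{k^{-1}H}(h_0)$ lie in $\pi_{k^{-1}H}(H)$, we obtain $d(\pi_{k^{-1}H}(h_0),k^{-1})\le C$. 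It thus suffices to bound $d(\pi_{k^{-1}H}(g),\pi_{k^{-1}H}(h_0))<L$.

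Suppose this second bound fails. Apply Lemma~\ref{lem:sisto1.15} to any geodesic $\gamma_0$ from $g$ to $h_0$ with $P=k^{-1}H$: then $\gamma_0$ enters $\Nbhd_R(k^{-1}H)$ at some point $y$; choose $z\in k^{-1}H$ with $d(y,z)\le R$. By Lemma~\ref{lem:proj along geod}, since $h_0\in \pi_H(g)$ and $y\in \gamma_0$, we have $h_0\in \pi_H(y)$. A symmetric computation gives $1\in \pi_H(k^{-1})$, and property~(3) then yields $\pi_H(k^{-1}H)\subset \Nbhd_C(1)$; in particular $\pi_H(z)$ lies within distance $C$ of $1$. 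Combining this with Lemma~\ref{lem:sisto2.3} applied to $y,z$ yields $d(h_0,1)\le R+7C$, contradicting $d(h_0,1)\ge R_0>R+10C$ from \eqref{eq:R_0}. This proves the intermediate bound, and combining it with $R_0>L+3C$ gives $d(\pi_{k^{-1}H}(g),k^{-1})\le L+C<R_0$ as required.

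The main technical point is the coordinated use of Lemma~\ref{lem:proj along geod}, which anchors the $\pi_H$-image of the segment from $g$ to $h_0$ at $h_0$, together with property~(3), which constrains the $\pi_H$-image of the neighboring peripheral $k^{-1}H$ into a small neighborhood of $1$. The choice \eqref{eq:R_0} of $R_0$ is precisely calibrated so these two estimates combine to force a contradiction, and no further constants are needed.
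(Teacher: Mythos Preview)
Your proof is correct and follows essentially the same approach as the paper: both choose a generator $k\notin H$, then combine property~(3) of the projection system with Lemmas~\ref{lem:proj along geod}, \ref{lem:sisto2.3}, and \ref{lem:sisto1.15} to bound $d(\pi_H(kg),1)$. The only cosmetic differences are that you pass via equivariance to $k^{-1}H$ and argue by contradiction, whereas the paper works directly with $kH$ and shows the projection separation $d(\pi_{kH}(x),\pi_{kH}(kg))\ge L$ up front; the same inequalities appear in both, just in contrapositive form.
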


\begin{proof}
	We divide into the following two cases.\\
	{\bf Case 1}: when $g$ already has $R_0$-short hand in $H$. That is $d(\pi_H(g),1)< R_0$ for the given peripheral subgroup $H$. Then we can simply choose $k=1$, and the proposition holds automatically.\\
	{\bf Case 2}: when $g$ does not have $R_0$-short hand in $H$. That is $d(\pi_H(g),1)\ge  R_0$ for the given peripheral subgroup $H$. 
	We choose $k\in \Gamma\backslash H$ to be a generator of $\Gamma$ so that $|k|=1$. Such a $k$ always exists since $H$ is a proper subgroup of $\Ga$. We claim that $kg$ has $R_0$-short head in $H$. In other words, let $x\in \pi_H(kg)$ be an arbitrary element that achieves $d(1,x)=d(1,\pi_H(kg))$, we need to show $d(1,x)< R_0$.
	
	Since $|k|=1$, we have both $d(1,k)=d(k,H)$ and $d(1,k^{-1})=d(k^{-1},H)$, and it follows that $1\in \pi_H(k)$ and $1\in \pi_H(k^{-1})$. The latter containment implies $k\in \pi_{kH}(1)\subset \pi_{kH}(H)$ by the left invariance of the metric. Since $\text{diam}(\pi_{kH}(H))<C$ according to Definition \ref{def:projection}, we know $d(k,\pi_{kH}(x))<C$. By triangle inequality, 
	\begin{align*}&d(\pi_{kH}(x),\pi_{kH}(kg))\\&\ge d(k,\pi_{kH}(kg))-d(k,\pi_{kH}(x))-\op{diam}(\pi_{kH}(kg))-\op{diam}(\pi_{kH}(x))\\
		&\ge d(k,\pi_{kH}(kg))-d(k,\pi_{kH}(x))-2C
		\\ &\ge d(k,\pi_{kH}(kg))-3C\\ &=d(1,\pi_{H}(g))-3C\\ &\ge R_0-3C,
	\end{align*}
where the second inequality uses Definition \ref{def:projection}. By the choice of $R_0$, we have that $R_0-3C>L$ where $L$ is the constant from Lemma \ref{lem:sisto1.15}. It follows by this lemma that any geodesic from $kg$ to $x$ passes through a point $y$ in $R$-neighborhood of $\pi_{kH}(x)$ (recall $R=R(1,0)$). Since $d(k,\pi_{kH}(x))<C$, the point $y$ is contained in $(R+C)$-neighborhood of $k$, that is $d(y,k)\le R+C$. On the other hand, since the geodesic from $kg$ to $x\in \pi_H(kg)$ passes through $y$, by Lemma \ref{lem:proj along geod} the point $x$ is in $\pi_H(y)$. Therefore, we have
	\begin{align*}
		d(\pi_H(kg),1)=d(x,1)&\le d(\pi_H(y),1)+C\\
		&\leq d(\pi_H(y),\pi_H(k))+d(\pi_H(k),1)+2C+C\\	
		&\le (R+C)+6C+0+3C.\\
		&\leq R+10C,
	\end{align*}
where the first inequality uses the fact that $x\in \pi_H(y)$ and also Definition \ref{def:projection}, the second inequality follows from the triangle inequality and the third inequality uses Lemma \ref{lem:sisto2.3}. With the choice of $R_0$ such that $R_0> R+10C$, we have that $d(\pi_H(kg),1)< R_0$.
\end{proof}

In the next proposition, we show that the above element can be further perturbed by adding a suffix to obtain an element having the long tail property, while keeping the short head property.

\begin{proposition}\label{prop:adding-end}Let $g\in G$ such that $d(1,g)>R_0+1$. Then for every $C_3\geq \max\{1,C\}$, if there is a peripheral subgroup $H$ such that $g$ has $R_0$-short head, then  
	 there exists a perturbation $g'=gh$ such that
	\begin{enumerate}
		\item $|h|\le 4C_3$,
		\item $g'$ has $C_3$-long tail in $H$, and
		\item $g'$ has $(R_0+C)$-short head in $H$.
	\end{enumerate}
\end{proposition}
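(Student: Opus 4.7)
My plan is to split into cases based on whether $g$ already has $C_3$-long tail in $H$. If it does, take $h=1$; then $g'=g$ satisfies all three conclusions trivially, since $R_0$-short head is stronger than $(R_0+C)$-short head. Assume henceforth $g$ does not have $C_3$-long tail; by definition there then exists $h_0\in H$ with $gh_0\in \pi_{gH}(1)$ and $|h_0|=d(gh_0,g)\le C_3$.

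In this case, choose $h\in H$ with $|h|$ in the interval $(2C_3+C,\,4C_3]$, and set $g'=gh$. The interval is nonempty of length $2C_3-C\ge C_3\ge 1$ under the standing assumption $C_3\ge\max\{1,C\}$, and the existence of an $h\in H$ realizing a norm in this interval follows from a short discretization argument using the infiniteness and finite generation of $H$.

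With this choice, $|h|\le 4C_3$ by construction. For the long tail, $h\in H$ gives $g'H=gH$, so $\pi_{g'H}(1)=\pi_{gH}(1)$, which has diameter at most $C$ by property~(2) of Definition~\ref{def:projection}. Hence for any $x\in\pi_{g'H}(1)$, using left invariance to write $d(gh_0,gh)=d(h_0,h)$, the triangle inequality gives
\[ d(x,g')\ge d(gh_0,gh)-C = d(h_0,h)-C \ge |h|-|h_0|-C > 2C_3+C-C_3-C = C_3. \]
For the short head, since $|g|>R_0+1$ together with the $R_0$-short head of $g$ forces $g\notin H$ (else $\pi_H(g)=\{g\}$ and $d(\pi_H(g),1)=|g|>R_0$, a contradiction), we have $gH\ne H$. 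Property~(3) of Definition~\ref{def:projection} then gives $\op{diam}(\pi_H(gH))\le C$, and since both $g$ and $g'$ lie in $gH$, any $y\in\pi_H(g)$ and $y'\in\pi_H(g')$ satisfy $d(y,y')\le C$, yielding $d(\pi_H(g'),1)\le d(\pi_H(g),1)+C<R_0+C$.

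The main obstacle I anticipate is the combinatorial step of realizing $h\in H$ with $|h|$ in the precise window $(2C_3+C,\,4C_3]$, since in general $H$'s $\Gamma$-norm spectrum could have sizeable gaps; one may need to enlarge the generating set of $\Gamma$ to include one of $H$ so that consecutive elements along a path in $H$ change $\Gamma$-norm by at most $1$. Once this is handled, the three desired properties follow directly from the almost projection system axioms together with the triangle inequality.
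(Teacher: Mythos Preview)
Your proof is correct and follows essentially the same approach as the paper's: the same case split on whether $g$ already has $C_3$-long tail, the same idea of choosing $h\in H$ of controlled length, and the same verification of the three conclusions via the triangle inequality, the diameter bound $\op{diam}(\pi_{gH}(1))\le C$, and property~(3) applied to $\pi_H(gH)$ once $g\notin H$ is established. Your window $(2C_3+C,\,4C_3]$ is slightly wider than the paper's $(3C_3,\,4C_3]$ (since $C_3\ge C$), but the computations are otherwise parallel.

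Your flagged ``obstacle'' about realizing $|h|$ in a prescribed interval is a fair point, and in fact the paper simply asserts ``since $H$ is an infinite subgroup, we can choose $h\in H$ such that $3C_3<|h|\le 4C_3$'' without further comment. Your proposed fix---enlarging the finite generating set of $\Gamma$ to contain a generating set of each $H_i$, so that a path in $H$ changes $\Gamma$-norm by at most $1$ per step---is the standard way to handle this and is an implicit convention in the paper's setup. With that convention in place, any interval of length at least $1$ (and both windows have length $\ge C_3\ge 1$) contains an achieved norm, so there is no gap here.
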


\begin{proof}If $d(\pi_{gH}(1), g)> C_3$, we then just choose $h=1$. Thus, we assume that $d(\pi_{gH}(1), g)\le  C_3$. Since $H$ is an infinite subgroup, we can choose $h\in H$ such that $3C_3< |h|\leq 4C_3$, and let $g'=gh$. By triangle inequality,
	\begin{align*}
	d(\pi_{g'H}(1), g')=d(\pi_{gH}(1), g')&\ge d(g, g')-d(\pi_{gH}(1), g)-C\\ 
	&> 3C_3-C_3-C\\
	&\geq C_3.
	\end{align*}
	It remains to check that the new element $g'$ still has short head property. That is to check $d(\pi_H(g'),1)<R_0+C$. For this, we first note that $g\notin H$. Otherwise $g\in H$ implies that $\pi_H(g)=B_g(1)$, the ball radius 1 around $g$. Thus by triangle inequality $d(1,g)<d(1,\pi_H(g))+1$. Since $d(1,g)>R_0+1$, we have $d(1,\pi_H(g))>R_0$, which contradicts with the assumption that $g$ has $R_0$-short head in $H$.
	
	Therefore $\op{diam}(\pi_H(g'H))=\op{diam}(\pi_H(gH))<C$ by Definition \ref{def:projection}. It follows by triangle inequality that $d(\pi_H(g'),1)\le d(\pi_H(g),1)+C<R_0+C$.
\end{proof}

Elements with short head and long tail in a peripheral subgroup will be shown later to satisfy \emph{the good periodicity property}, and Proposition \ref{prop:perturb-start} and Proposition \ref{prop:adding-end} simply descibes how such an element can be obtained from a perturbation.

Next we show that if an element $g$ has short head and long tail in some peripheral subgroup $H$, then its power $g^n$ also has short head and long tail in $H$, with possibly worse constants, but they are uniform on $n$.

\begin{proposition}\label{prop:good-element} For any $\delta>0$, there exist constants $C_1,C_2, C_3>0$ explicitly given by
	\[C_1=\delta+R+9C\]
	\[ C_2=C_1+L+3C\]
and
\[C_3=C_2+R+10C+\delta+2,\]
such that the following holds: for every $g\in G$, if there exists a peripheral subgroup $H$ such that $g$ has $\delta$-short head and $C_3$-long tail in $H$, then for any $n\in \mathbb N$, we have
	\begin{enumerate}
		\item $g^n$ has $C_1$-short head in $H$.
		\item $g^n$ has $C_2$-long tail in $H$. 		
	\end{enumerate}
\end{proposition}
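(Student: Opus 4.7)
The plan is to prove (1) and (2) simultaneously by induction on $n$. The base case $n=1$ is immediate from the inequalities $\delta \leq C_1$ and $C_2 \leq C_3$. For the inductive step, assume both conclusions for $g^{n-1}$ and deduce them for $g^n$. The two parts are essentially parallel: (1) examines a geodesic from $g^n$ to a point $q \in \pi_H(g^n)$, whereas (2) examines a geodesic from $1$ to a point $a \in \pi_{g^n H}(1)$.

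For (1), I would apply Lemma \ref{lem:sisto1.15} with $P = gH$. The projection $\pi_{gH}(g^n) = g\pi_H(g^{n-1})$ lies within $C_1$ of $g$ by the inductive short-head hypothesis, while $\pi_{gH}(q) \subset \pi_{gH}(H)$ is forced by property (3) of the almost projection system to sit within $C$ of $\pi_{gH}(1)$, hence at distance at least $C_3 - C$ from $g$ by the long-tail hypothesis on $g$. Since the constants are arranged so that $C_3 - C - C_1 > L$, Lemma \ref{lem:sisto1.15} supplies a point $y$ on the geodesic lying within $R$ of some specific $c \in \pi_{gH}(g^n)$. Lemma \ref{lem:proj along geod} then gives $q \in \pi_H(y)$, and Lemma \ref{lem:sisto2.3} combined with property (3) applied to $\pi_H(gH)$ (which has diameter at most $C$ and contains $\pi_H(g) \ni p$ with $d(p,1) < \delta$) yields $d(q,1) < R + 7C + \delta$, which is strictly less than $C_1 = \delta + R + 9C$.

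Part (2) would run symmetrically with $P = g^{n-1}H$: now $\pi_{g^{n-1}H}(1)$ is far from $g^{n-1}$ by the inductive long-tail hypothesis, while $\pi_{g^{n-1}H}(a) \subset \pi_{g^{n-1}H}(g^n H)$ is within $\delta + C$ of $g^{n-1}$ by the short-head hypothesis on $g$. Lemma \ref{lem:sisto1.15} locates a point $y$ near some $c \in \pi_{g^{n-1}H}(a) \subset g^{n-1}H$, Lemma \ref{lem:proj along geod} places $a$ in $\pi_{g^n H}(y)$, and Lemma \ref{lem:sisto2.3} combined with property (3) applied to $\pi_{g^n H}(g^{n-1}H)$ (whose elements are all at distance at least $C_3 - C$ from $g^n$, because $\pi_{g^n H}(g^{n-1}) = g^{n-1}\pi_{gH}(1)$ is far from $g^n$ by the long-tail hypothesis) produces $d(a, g^n) > C_3 - R - 7C$, which exceeds $C_2$ by the definition of $C_3$.

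The main obstacle, and the reason the stated constants take their particular form, is arranging the bookkeeping so that the inductive constants do not blow up from step $n-1$ to step $n$. A naive application of Lemma \ref{lem:sisto2.3} between $\pi_H(y)$ and $\pi_H(g^{n-1})$ would drag in another factor of $C_1$ at each step and defeat the induction. The crucial trick is to compare projections not with $g$ or $g^{n-1}$, but with the auxiliary point $c$ sitting in $\pi_{gH}(g^n) \subset gH$ (respectively $\pi_{g^{n-1}H}(a) \subset g^{n-1}H$); property (3) then confines the relevant comparison projection into $\pi_H(gH)$ (respectively $\pi_{g^n H}(g^{n-1}H)$), whose diameter is uniformly bounded by $C$ and therefore removes the inductive dependence and closes the induction with the stated constants.
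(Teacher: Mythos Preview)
Your proposal is correct and follows essentially the same inductive scheme as the paper: for part (1) you route the geodesic through a point near $gH$ and use property (3) on $\pi_H(gH)$, and for part (2) you route through $g^{n-1}H$ and use property (3) on $\pi_{g^nH}(g^{n-1}H)$, exactly as the authors do (with only inessential differences in which end of Lemma~\ref{lem:sisto1.15} you land near, and in the constant bookkeeping). One small omission: you invoke property (3) for $\pi_H(gH)$ and $\pi_{g^nH}(g^{n-1}H)$ without verifying that $gH \neq H$, which the paper checks explicitly at the outset (it follows from the short-head/long-tail hypotheses since $C_3 > \delta + 2$).
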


\begin{proof} We first note that the element $g$ which satisfies the condition cannot belong to the peripheral subgroup $H$. Otherwise, $\pi_H(g)$ consists of elements in $H$ whose distance to $g$ are at most $1$, so by triangle inequality $d(\pi_H(g),1)\geq d(g,1)-1$. On the other hand, since $g$ has $C_3$-long tail in $H$, we have $d(\pi_{gH}(1), g)>C_3$. It follows similarly that $d(\pi_{gH}(1), g)<d(1,g)+1$. Thus, combining with the previous inequalities, we have
	\[d(g,\pi_{gH}(1))>C_3-2>\delta,\]
which contradicts the $\delta$-short head assumption of $g$.

For every $n\in \mathbb N$, we let $P(n)$ be the statement that $(1)$ and $(2)$ hold for $n$. We prove by induction that $P(n)$ holds for every $n\in \mathbb N$.\\
{\bf Base case $n=1$}: This follows immediately from the assumption of $g$ and the choices that $C_1>\delta$ and $C_3> C_2$.\\
{\bf Induction hypothesis}:	Suppose $P(n)$ holds for all $n\leq k$ (where $k\in \mathbb N$).\\
{\bf Induction step}: We need to show $P(k+1)$ holds. We prove $(1)$ and $(2)$ separately, that is, $d(\pi_{H}(g^{k+1}),1)<C_1$ and $d(\pi_{g^{k+1}H}(1),g^{k+1})>C_2$ respectively.
	
Proof of $(1)$: Let $x\in \pi_{H}(g^{k+1})\subset H$ be an arbitrary element. We denote $\ga$ a geodesic segment from $g^{k+1}$ to $x$. By induction hypothesis, we have 
\[d(\pi_{gH}(g^{k+1}), g)=d(\pi_{H}(g^k),1)<C_1,\] and also $d(\pi_{gH}(1),g)>C_2$. Since $g\notin H$, by Defninition \ref{def:projection} we have $\op{diam}(\pi_{gH}(H))<C$ . So by the triangle inequality, we have 
	\[d(\pi_{gH}(x), g)\geq d(\pi_{gH}(1), g)-\op{diam}(\pi_{gH}(H))> C_2-C.\]
	Thus,
	\begin{align*}
		&d(\pi_{gH}(x), \pi_{gH}(g^{k+1}))\\&\geq d(\pi_{gH}(x), g)-d(\pi_{gH}(g^{k+1}), g)-\op{diam}(\pi_{gH}(x))-\op{diam}(\pi_{gH}(g^{k+1}))\\
		&\geq C_2-C-C_1-C-C\\
		&\geq L,
	\end{align*}
by the choice of $C_1, C_2$. Apply Lemma \ref{lem:sisto1.15} on $\ga$, we see that $\ga$ intersects with $\Nbhd_R(\pi_{gH}(x))$, and if we let $z\in \ga\cap \Nbhd_R(\pi_{gH}(x))$ and let $z'\in \pi_{gH}(x)$ such that $d(z,z')<R$, then by Lemma \ref{lem:sisto2.3}, we have
\begin{align*}
	d(\pi_H(z),\pi_H(z'))&<d(z,z')+6C\\
	&\leq R+6C.
\end{align*}
Since $\ga$ is a geodesic connecting $g^{k+1}$ with $x$ and $x\in\pi_{H}(g^{k+1})$, we have $x\in\pi_{H}(z)$ by Lemma \ref{lem:proj along geod}.
	On the other hand, since both $z'$ and $g$ belongs to the peripheral subset $gH$, we see that $\pi_H(z')$ and $\pi_H(g)$ are both contained in the set $\pi_H(gH)$ whose diameter is bounded by $C$. Thus we have
	\begin{align*}
		d(1,\pi_H(z'))&\leq d(1,\pi_H(g))+C<\delta+C,
	\end{align*}
	where the last inequality uses the assumption that $g$ has $\delta$-short head in $H$.
	
	Finally, we can estimate
	\begin{align*}
		d(1,x)&\leq d(1,\pi_H(z'))+d(\pi_H(z'),\pi_H(z))+d(\pi_H(z),x)+2C\\
		&< (\delta+C)+(R+6C)+0+2C\\
		&=C_1,
	\end{align*}
by the choice of $C_1$. Thus, $(1)$ of $P(k+1)$ holds.
	
Proof of $(2)$: Let $y\in \pi_{g^{k+1}H}(1)$ be arbitrary. We denote $\gamma'$ a geodesic path from $1$ to $y$.	By induction hypothesis, we have 
	\begin{align*}
		d(\pi_{g^kH}(g^{k+1}),g^k)=d(\pi_{H}(g),1)<\delta,
	\end{align*}
	and also $d(\pi_{g^kH}(1), g^k)>C_2$. Note that \[\op{diam}(\pi_{g^kH}(g^{k+1}H))=\op{diam}(\pi_{H}(gH))<C\]
	since $g\notin H$. So $d(\pi_{g^kH}(y),\pi_{g^kH}(g^{k+1}))\le \op{diam}(\pi_{g^kH}(g^{k+1}H)\le C$. Thus by the triangle inequality, we have 
	\begin{align*}
		d(\pi_{g^{k}H}(y), \pi_{g^k H}(1))&\geq d(\pi_{g^k H}(1),g^k)-d(\pi_{g^kH}(g^{k+1}), g^k)-\op{diam}(\pi_{g^kH}(g^{k+1}))\\
		&> C_2-\delta-C\\
		&> L.
	\end{align*}
	By Lemma \ref{lem:sisto1.15}, we have $\ga'\cap \Nbhd_R(\pi_{g^kH}(y))\neq \emptyset$. We choose $w\in \pi_{g^kH}(y)$ and $w'\in\ga'\cap \Nbhd_R(\pi_{g^kH}(y))$ such that $d(w,w')<R$. Then by triangle inequality, we have
	\begin{align*}
		d(w',g^k)&\leq d(w',\pi_{g^kH}(y))+\op{diam}(\pi_{g^kH}(g^{k+1}H))+d(\pi_{g^kH}(g^{k+1}), g^k)\\
		&<d(w',w)+C+d(\pi_H(g),1)\\
		&<R+C+\delta,
	\end{align*}
	where the second inequality follows from the fact that $w\in \pi_{g^kH}(y)$, by Lemma \ref{lem:proj along geod}.
	On the other hand, since $\ga'$ is a geodesic and $y\in \pi_{g^{k+1}H}(1)$, by Lemma \ref{lem:proj along geod} we have $y\in \pi_{g^{k+1}H}(w')$. It follows by Lemma \ref{lem:sisto2.3} that
	\begin{align*}
		d(y,\pi_{g^{k+1}H}(g^k))&\le d(y, \pi_{g^{k+1}H}(w'))+ d(\pi_{g^{k+1}H}(w'),\pi_{g^{k+1}H}(g^k))\\
		&+\op{diam}(\pi_{g^{k+1}H}(w'))+\op{diam}(\pi_{g^{k+1}H}(g^k))\\
		&<0+ d(w',g^k)+6C+2C\\
		&<R+C+\delta+8C\\
		&=R+9C+\delta.
	\end{align*}
	Note that $d(\pi_{g^{k+1}H}(g^k),g^{k+1})=d(\pi_{gH}(1),g)>C_3$ by the assumption. Thus,
	\begin{align*}
		d(y,g^{k+1})&\geq d(\pi_{g^{k+1}H}(g^k),g^{k+1})-d(y,\pi_{g^{k+1}H}(g^k))-\op{diam}(\pi_{g^{k+1}H}(g^k))\\
		&>C_3-(R+9C+\delta)-C\\
		&>C_2.
	\end{align*}
Hence $(2)$ of $P(k+1)$ holds.
	
	Therefore, by induction, the proposition holds. 
\end{proof}

We now fix all constants so that they depend only on the projection system, or equivalently a finitely generated word metric on $\Gamma$. We first fix $R_0$ by choosing
 \begin{equation}\label{eq:fix R_0}
 R_0=R+L+10C,
 \end{equation}
where we recall that $R=R(1,0)$ and $L$ are constants from Lemma \ref{lem:sisto1.15}. In particular, it satisfies inequality \eqref{eq:R_0}. Besides, we also fix $\delta=R_0+C$ as in the following proposition. We emphasize that by doing so, all constants including $R, L, C, C_1, C_2$, and $C_3$ are now determined and they only depend on the projection system. We summarize the above discussions into the following proposition, which will be the main technical preparation of our theorem.

\begin{proposition}\label{prop:perturb-end} There exist $C_1,C_2,C_3$ explicitly given by
	\[C_1=\delta+R+9C\]
	\[ C_2=C_1+L+3C\]
	\[C_3=C_2+R+10C+\delta+2,\]
	where 
\[\delta=R_0+C=R+L+11C,\]
with the following properties: for every element $g_0\in \Gamma$ with $|g_0|>R_0+2$, there exist an element $g=kg_0h\in \Gamma$ and a peripheral subgroup $H$ such that
\begin{enumerate}
\item $|k|\le 1$.
\item $|h|\le 4C_3$.
\item for every $n\in \mathbb N$, $g^n$ has $C_1$-short head and $C_2$-long tail in $H$.
\end{enumerate}
\end{proposition}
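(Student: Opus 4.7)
The plan is to obtain $g$ by successive perturbations, chaining together the three earlier propositions in this section. First I would pick any peripheral subgroup $H$ from the given finite family (say $H = H_1$). Applying Proposition \ref{prop:perturb-start} to $g_0$ and $H$ produces $k \in \Gamma$ with $|k| \le 1$ such that $kg_0$ has $R_0$-short head in $H$; this gives condition (1) and hands us a reasonable starting point for the next perturbation.

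Next I would check that $|kg_0| > R_0 + 1$, which holds because by hypothesis $|g_0| > R_0 + 2$ and $|k| \le 1$, so by the triangle inequality $|kg_0| \ge |g_0| - |k| > R_0 + 1$. This lets me invoke Proposition \ref{prop:adding-end} with the choice of constant $C_3$ fixed in the statement (noting $C_3 \ge \max\{1, C\}$ since each $C_j$ is built up from $C$ and the other nonnegative constants). That proposition produces $h \in H$ with $|h| \le 4C_3$ such that $g := (kg_0)h = kg_0h$ has $C_3$-long tail in $H$ and $(R_0 + C)$-short head in $H$. This gives condition (2), and since we have pinned $\delta := R_0 + C$, the element $g$ has both $\delta$-short head and $C_3$-long tail in $H$.

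Finally, with $\delta$ and $C_3$ matched to the constants chosen in Proposition \ref{prop:good-element}, I would apply that proposition to $g$, which immediately yields that for every $n \in \mathbb{N}$ the power $g^n$ has $C_1$-short head and $C_2$-long tail in $H$, giving condition (3).

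The proposition is essentially a bookkeeping corollary of the three preceding propositions, so there is no genuine obstacle beyond verifying that all the constants line up. The one thing I would double-check carefully is that the size hypothesis $|kg_0| > R_0 + 1$ needed to invoke Proposition \ref{prop:adding-end} is really guaranteed by the assumption $|g_0| > R_0 + 2$ together with $|k| \le 1$, and that the $\delta$-short head constant $\delta = R_0 + C$ fed into Proposition \ref{prop:good-element} agrees with the $(R_0+C)$-short head output of Proposition \ref{prop:adding-end}; the explicit formulas for $\delta$, $C_1$, $C_2$, $C_3$ in the statement are chosen precisely so that these match.
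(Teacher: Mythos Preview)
Your proposal is correct and follows essentially the same approach as the paper: apply Proposition~\ref{prop:perturb-start} to obtain $k$, use the triangle inequality to verify the size hypothesis for Proposition~\ref{prop:adding-end}, apply that proposition to obtain $h$, and then feed the resulting $g=kg_0h$ with $\delta=R_0+C$ into Proposition~\ref{prop:good-element}. The only cosmetic difference is that you explicitly fix $H=H_1$ at the outset, whereas the paper phrases it as ``there exist $k$ and $H$''; since Proposition~\ref{prop:perturb-start} accepts any peripheral subgroup as input, this is immaterial.
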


\begin{proof}

Given any element $g_0\in \Ga$, by Proposition \ref{prop:perturb-start}, there exist $k\in \Gamma$ and a peripheral subgroup $H$ such that $|k|\leq 1$ and $kg_0$ has $R_0$-short head in $H$. Since $|g_0|>R_0+2$, by triangle inequality we have $|kg_0|\geq |g_0|-|k|>R_0+1$.  By applying Proposition \ref{prop:adding-end} on the element $kg_0$, there exists $h\in \Gamma$ such that $|h|<4C_3$ and $kg_0h$ has $(R_0+C)$-short head, $C_3$-long tail in $H$. Apply Proposition \ref{prop:good-element} to the element $g=kg_0h$ and on the scale $\delta=R_0+C$, we conclude that $g^n$ has $C_1$-short head and $C_2$-long tail in $H$ for any $n\in \mathbb N$.
\end{proof}

One significance of obtaining such an element is that all its iterated powers belong to a uniform neighborhood of a geodesic. This is what we were referring to as \emph{the good periodicity property} in the introduction, and we will show in the next proposition. Thus, together with the above proposition, we show that up to appropriate perturbations, essentially all elements in a relatively hyperbolic group will have good periodicity property.

\begin{proposition}\label{prop:periodicMorse}Let $g$ be an element such that for every $n\in \mathbb N$, $g^n$ has $C_1$-short head and $C_2$-long tail in a peripheral subgroup $H$, where $C_1$ and $C_2$ are constants from Proposition \ref{prop:perturb-end}. For every $\tau\ge 1$ and $\eta\ge 0$, there is a constant $\kappa>0$, depending only on $\tau, \eta$ and $\Ga$, such that for every $n\in \mathbb N$ and for every $(\tau,\eta)$-quasigeodesic $\gamma$ from 1 to $g^n$, $g^i$ is in $\kappa$-neighborhood of $\gamma$ for each $0\leq i\leq n$.
\end{proposition}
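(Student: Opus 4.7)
The plan is to show that for every intermediate $0 \le i \le n$, the element $g^i$ is forced to lie near any quasigeodesic from $1$ to $g^n$ by a projection argument onto the peripheral subset $g^i H$. The boundary cases $i=0$ and $i=n$ are trivial since $1$ and $g^n$ lie on $\gamma$ itself, so fix $0 < i < n$ and focus on the intermediate case.

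The key is to convert the hypotheses on $g^i$ and $g^{n-i}$ into two statements about projections onto the single peripheral subset $g^i H$, using left invariance of the metric and of the projection system. First, applying the $C_2$-long tail hypothesis to $g^i$ directly gives $d(\pi_{g^i H}(1), g^i) > C_2$. Second, applying the $C_1$-short head hypothesis to $g^{n-i}$ yields $d(\pi_H(g^{n-i}), 1) < C_1$; translating both sides on the left by $g^i$ and noting $g^n = g^i g^{n-i}$, we obtain
\[ d(\pi_{g^i H}(g^n), g^i) < C_1. \]

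Now I would combine these via the triangle inequality and the uniform diameter bound $\op{diam}(\pi_P(x)) < C$ from Definition \ref{def:projection}. For any $x \in \pi_{g^i H}(1)$ and $y \in \pi_{g^i H}(g^n)$, choosing some $y_0 \in \pi_{g^i H}(g^n)$ with $d(y_0, g^i) < C_1$ gives $d(y, g^i) < C_1 + C$, and hence
\[ d(x,y) \ge d(x, g^i) - d(g^i, y) > C_2 - C_1 - C = L + 2C > L, \]
using $C_2 - C_1 = L + 3C$ from Proposition \ref{prop:perturb-end}. Therefore $d(\pi_{g^i H}(1), \pi_{g^i H}(g^n)) > L$, so Lemma \ref{lem:sisto1.15} applied with $K_0 = \tau$ and $C_0 = \eta$ forces the quasigeodesic $\gamma$ to intersect the $R(\tau,\eta)$-neighborhood of $\pi_{g^i H}(g^n)$. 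Any such intersection point is within $R(\tau,\eta) + C_1$ of $g^i$, so setting
\[ \kappa = R(\tau,\eta) + C_1 \]
finishes the proof, a constant that depends only on $\tau$, $\eta$, and the projection system of $\Ga$ (equivalently, on $\Ga$ with a fixed word metric).

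The main (and really only) subtlety is the bookkeeping with set-valued projections: one must carry the diameter bound $C$ through the triangle inequalities so that the gap $C_2 - C_1 = L + 3C$ is enough to exceed the threshold $L$ in Sisto's lemma. This is exactly why $C_1$ and $C_2$ were defined with an excess of $L + 3C$ in Proposition \ref{prop:perturb-end}, so no additional work is needed beyond carefully tracking constants.
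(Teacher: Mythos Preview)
Your argument is essentially identical to the paper's: translate the short-head/long-tail hypotheses to the peripheral subset $g^iH$, use the gap $C_2-C_1=L+3C$ to trigger Lemma~\ref{lem:sisto1.15}, and conclude that $\gamma$ passes near $\pi_{g^iH}(g^n)$ and hence near $g^i$. The only slip is in the last constant: the point of $\gamma$ you obtain is within $R(\tau,\eta)$ of \emph{some} $y\in\pi_{g^iH}(g^n)$, but you only know $d(y,g^i)<C_1+C$ (not $<C_1$) for an arbitrary such $y$, so $\kappa$ should be $R(\tau,\eta)+C_1+C$ rather than $R(\tau,\eta)+C_1$; the paper takes $\kappa=R(\tau,\eta)+C_1+C+\eta$, but this is inconsequential to the argument.
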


\begin{proof}
For every $0< i< n$, since $g^i$ has $C_2$-long tail in $H$, we know $d(\pi_{g^iH}(1),g^i)>C_2$. Since $g^{n-i}$ has $C_1$-short head in $H$, we have
\[d(g^i,\pi_{g^iH}(g^n))=d(1,\pi_H(g^{n-i}))<C_1.\]
Thus, by triangle inequality, we have
\begin{align*}
	&d(\pi_{g^iH}(1),\pi_{g^iH}(g^n))\\&\ge d(\pi_{g^iH}(1),g^i)-d(g^i,\pi_{g^iH}(g^n))-\op{diam}(\pi_{g^iH}(g^n))-\op{diam}(\pi_{g^iH}(1))\\
	&> C_2-C_1-2C\\
	&\geq L
\end{align*}
by the choice of $C_1, C_2$. By Lemma \ref{lem:sisto1.15}, there exists a constant $R(\tau,\eta)$ such that $\gamma$ intersects the neighborhood $\Nbhd_{R(\tau,\eta)}(\pi_{g^iH}(g^n))$. On the other hand, since $d(\pi_{g^iH}(g^n),g^i)=d(\pi_H(g^{n-i}), 1)<C_1$, it follows by the triangle inequality that $g^i$ is in $(R(\tau,\eta)+C_1+C)$-neighborhood of $\gamma$. 

For $i=0$ or $i=n$, $g^i$ is in the $\eta$-neighborhood of $\ga$. Hence if we choose $\kappa = R(\tau,\eta)+C_1 + C+\eta$, the proposition follows.
\end{proof}

\section{Rigidity}

In this section, we assume that $d_1$ and $d_2$ are two left invariant roughly geodesic metrics on a finitely generated relatively hyperbolic group $\Gamma$. Furthermore, assume that $d_1$ and $d_2$ have the same marked length spectrum. We let $\Delta=|d_1-d_2|$. Suppose that both metrics are $(L_0,C_0)$-quasi-isometric to a word metric $d$ defined by a finite generating set.

Fujiwara proved the following almost additivity for the difference between two metrics.

\begin{lemma}\cite[Lemma 2.3]{Fujiwara15}\label{lem:additivity} For $i=1,2$, let $\gamma_i$ be a $(1,\delta)$-roughly geodesic from $x$ to $y$ with respect to $d_i$ metric. For every $\kappa>0$ there exists $\bar\kappa>0$ such that the following holds: if $z\in \gamma_1$ is a point such that there exists $z'\in \gamma_2$ with $d_2(z,z') \le \kappa$, then $|\Delta(x, z)+\Delta(z, y) - \Delta(x, y)|<\bar\kappa$.
\end{lemma}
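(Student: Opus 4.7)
The plan is to split the defect into the two metrics separately. Writing $\Delta = d_1 - d_2$ as a signed difference (the paper's definition as $|d_1-d_2|$ does not affect the final bound, which only needs to control the signed quantity), we have
\begin{equation*}
\Delta(x,z) + \Delta(z,y) - \Delta(x,y) = D_1(z) - D_2(z),\qquad D_i(w) := d_i(x,w) + d_i(w,y) - d_i(x,y).
\end{equation*}
It therefore suffices to bound $|D_1(z)|$ and $|D_2(z)|$ by constants depending only on $\delta$ and $\kappa$.

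For $D_1(z)$: since $z$ lies on $\gamma_1$, parametrize $\gamma_1$ so that $\gamma_1(t_x) = x$, $\gamma_1(t_z) = z$, $\gamma_1(t_y) = y$ with $t_x < t_z < t_y$. The $(1,\delta)$-quasigeodesic upper bounds give $d_1(x,z) \le t_z - t_x + \delta$ and $d_1(z,y) \le t_y - t_z + \delta$, summing to $t_y - t_x + 2\delta$, while the lower bound $d_1(x,y) \ge t_y - t_x - \delta$ yields $d_1(x,z) + d_1(z,y) \le d_1(x,y) + 3\delta$. Combined with the triangle inequality $D_1(z) \ge 0$, this gives $|D_1(z)| \le 3\delta$. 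The same argument applied to $\gamma_2$ at $z'$ gives $|D_2(z')| \le 3\delta$, and I then transfer this estimate to $z$ via the hypothesis $d_2(z,z') \le \kappa$: two applications of the triangle inequality yield $|d_2(x,z) - d_2(x,z')| \le \kappa$ and $|d_2(z,y) - d_2(z',y)| \le \kappa$, whence $|D_2(z) - D_2(z')| \le 2\kappa$ and therefore $|D_2(z)| \le 3\delta + 2\kappa$.

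Combining the two bounds gives $|\Delta(x,z) + \Delta(z,y) - \Delta(x,y)| \le |D_1(z)| + |D_2(z)| \le 6\delta + 2\kappa$, so we may take $\bar\kappa := 6\delta + 2\kappa$. I do not anticipate a genuine obstacle; this lemma is essentially a triangle-inequality bookkeeping exercise. The only conceptual point worth highlighting is that the proximity of $z$ and $z'$ is measured in $d_2$ (rather than $d_1$), which is precisely what is needed: $z$ itself already lies on $\gamma_1$, so the $d_1$-side of the defect requires no approximation, while only $z'$ lies on $\gamma_2$, so the $d_2$-side must be shifted from $z'$ to $z$ at the cost of a $2\kappa$ error.
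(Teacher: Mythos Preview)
The paper does not supply its own proof of this lemma; it merely cites \cite[Lemma~2.3]{Fujiwara15} and moves on. So there is nothing to compare against, and your argument stands on its own. Your computation is correct: bounding the geodesic defects $D_1(z)$ and $D_2(z)$ separately and then combining is exactly the right bookkeeping, and the explicit constant $\bar\kappa = 6\delta + 2\kappa$ is fine.

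One remark worth making explicit. You prove the inequality for the \emph{signed} difference $\Delta = d_1 - d_2$, and your parenthetical claims this is harmless. That is correct as far as the application in the proof of Theorem~\ref{thm:main} is concerned, but the lemma \emph{as stated} in the paper, with $\Delta = |d_1 - d_2|$, is not literally what you proved, and in fact the unsigned version can fail: take $z$ lying on genuine geodesics for both metrics with $d_1(x,z)=100$, $d_1(z,y)=50$, $d_2(x,z)=50$, $d_2(z,y)=100$, $d_1(x,y)=d_2(x,y)=150$; then $D_1(z)=D_2(z)=0$ yet $|\Delta(x,z)|+|\Delta(z,y)|-|\Delta(x,y)| = 100$. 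So you should either (i) flag that the correct statement of the lemma uses the signed $\Delta$, or (ii) keep the unsigned $\Delta$ but observe that in the main theorem one may assume without loss of generality that $d_1(1,g') > d_2(1,g')$, run the signed argument, and recover the unsigned conclusion at the end. Either way your proof is sound; just be precise about which version you are establishing.
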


We are now ready to give a proof of our main theorem.

\begin{proof}[Proof of Theorem \ref{thm:main}]
Suppose $\tau\ge 1$ and $\eta\ge 0$  are constants such that $d_1$ and $d_2$ are $(\tau,\eta)$-quasi-isometric to a word metric $d$. The marked length spectrum assumption implies that for every $g\in \Gamma$, we have the following sub-linear growth $\Delta(1,g^n)=o(n)$.

Suppose, for the sake of contradiction, that $d_1$ and $d_2$ are not roughly equal. Thus for every $\mathcal E>0$, there is an element (hence infinitely many elements) $g'\in \Gamma$ such that $\Delta (1, g')>\mathcal E$. By possibly avoiding finitely many elements, we may assume $|g'|>R_0+2$ where we recall $|\cdot |$ is the norm measured in word metric $d$ and $R_0$ is the constant chosen in \eqref{eq:fix R_0}. By Proposition \ref{prop:perturb-end}, there is a peripheral subgroup $H$, elements $k, h\in \Gamma$, and constants $C_1,C_2,C_3$ such that
\begin{enumerate}
\item $|d(1,k)|\le 1$,
\item $|d(1,h)|\le 4C_3$,
\item For every $n$, the element $g^n$ has $C_1$-short head and $C_2$-long tail in $H$, where $g=kg'h$.
\end{enumerate}

Since $d_1$ and $d_2$ are roughly geodesic, there is $\delta_0$ such that for any $x,y\in \Gamma$ are $(1,\delta_0)$ quasi-geodesics from $x$ to $y$ with respect to $d_1$ and $d_2$. As $d_1$ and $d$ are $(\tau,\eta)$ quasi-isometric, any $(1,\delta_0)$-quasi-geodesic in $d_1$ metric is a $(\tau,\tau\delta_0+\eta)$-quasi-geodesic in the word metric $d$.

By Proposition \ref{prop:periodicMorse}, there exists $\kappa$ for every $n\in \mathbb N$, any $(1,\delta_0)$-quasi-geodesic $\gamma_n$ in $d_1$ metric from $1$ to $g^n$ passes through $\kappa$-neighborhoods of $g^i$ for every $0\le i\le n$. Similarly, for every $(1,\delta_0)$-quasi-geodesic $\beta_n$ from 1 to $g^n$ in $d_2$ metric,  $\beta_n$ passes through $\kappa$-neighborhoods of $g^i$ for every $0\le i\le n$. By Lemma \ref{lem:additivity}, there exists $\bar\kappa\ge 0$ such that
$$|\Delta(1, g)+\Delta(g, g^2)+\dots +\Delta(g^{n-1}, g^n) - \Delta(1, g^n)|<n\bar\kappa.$$
By left-invariance of $d_1, d_2$, and thus of $\Delta$, this is equivalent with 
$$|n\Delta(1, g) - \Delta(1, g^n)|<n\bar\kappa.$$
Therefore, $\Delta(1,g^n)>n(\Delta(1,g)-\bar\kappa)$. We note that $C_1,C_2,C_3$ and thus $\kappa$, $\bar\kappa$ depend only on $\delta_0$, $\tau$, $\eta$, and the metric $d$, but they are independent of the constant $\mathcal E$. Since $g=kg'h$ with  $|d(1,k)|\le 1$ and $|d(1,h)|\le 4C_3$, using triangle inequalities we have that
$$\Delta(1,g)>\Delta(1,g')- 2(\tau+\eta + \tau \cdot 4C_3 +\eta).$$
Hence if we choose $\mathcal E>2(\tau+\eta + \tau \cdot 4C_3 +\eta)+\bar\kappa + 1$ then $\Delta(1,g^n)>n$ for every $n\in \mathbb N$. This contradicts with the sub-linear growth $\Delta(1,g^n)=o(n)$. 

Therefore $d_1$ and $d_2$ are roughly equal.
\end{proof}

As an immediate corollary of our Theorem \ref{thm:main}, we have the following:

\begin{corollary}
	Let $M$ be a closed smooth manifold whose fundamental group is relatively hyperbolic. If $g_1$ and $g_2$ are two non-positively curved Riemannian metrics with the same marked length spectrum, then 
	there exists a constant $C\geq 0$ and a $\Ga$-equivariant $(1,C)$-quasi-isometry between their universal covers $(\widetilde M,\widetilde g_1)$ and $(\widetilde M,\widetilde g_2)$.
\end{corollary}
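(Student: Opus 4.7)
The plan is to reduce the corollary to Theorem \ref{thm:main} by pulling back both Riemannian metrics to left-invariant metrics on $\Ga = \pi_1(M)$, then extending the resulting comparison back to the universal cover equivariantly.

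First I would fix a basepoint $o \in \widetilde M$ and, for $i=1,2$, define the pullback metric $d_i(\ga,\ga') := \widetilde g_i(\ga\cdot o,\ga'\cdot o)$ on $\Ga$, where $\Ga$ acts on $\widetilde M$ by deck transformations. Since $M$ is closed, each such action is free, properly discontinuous, by isometries, and cocompact, so \v{S}varc-Milnor gives that each $d_i$ is quasi-isometric to a word metric on $\Ga$; cocompactness together with the fact that $(\widetilde M,\widetilde g_i)$ is a geodesic space also forces each $d_i$ to be roughly geodesic, as explained after Question~\ref{ques:main}. For nonpositively curved Riemannian manifolds, the translation length of $\ga$ acting on $\widetilde M$ equals the length of any closed geodesic representative of the conjugacy class of $\ga$, so the equality of classical marked length spectra for $g_1$ and $g_2$ translates into $\ell_{d_1}=\ell_{d_2}$ on $\Ga$. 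All hypotheses of Theorem~\ref{thm:main} are therefore met, and it yields a constant $C \ge 0$ with $|d_1(\ga,\ga')-d_2(\ga,\ga')| \le C$ for every $\ga,\ga'\in\Ga$.

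To upgrade this to a $\Ga$-equivariant $(1,C')$-quasi-isometry between $(\widetilde M,\widetilde g_1)$ and $(\widetilde M,\widetilde g_2)$, I would fix a fundamental domain $F$ for the $\Ga$-action containing $o$ (for instance a Dirichlet domain for $\widetilde g_1$), and define $f:\widetilde M\to\widetilde M$ by sending each $x\in\ga_x F$ to $\ga_x\cdot o$. The map $f$ is $\Ga$-equivariant essentially by construction, since $\ga_{\eta x}=\eta\ga_x$. For any $x,y\in\widetilde M$, writing the chain
\[
\widetilde g_1(x,y) \;\stackrel{D_1}{\approx}\; \widetilde g_1(\ga_x o,\ga_y o) \;=\; d_1(\ga_x,\ga_y) \;\stackrel{C}{\approx}\; d_2(\ga_x,\ga_y) \;=\; \widetilde g_2(f(x),f(y)),
\]
where $D_1 = 2\,\mathrm{diam}_{\widetilde g_1}(F)$, shows $|\widetilde g_1(x,y)-\widetilde g_2(f(x),f(y))|\le D_1+C$; since $f(\widetilde M) = \Ga\cdot o$ is $\mathrm{diam}_{\widetilde g_2}(F)$-dense in $(\widetilde M,\widetilde g_2)$, the map $f$ is a genuine $(1,C')$-quasi-isometry for $C' := D_1 + \mathrm{diam}_{\widetilde g_2}(F) + C$. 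I do not expect a real obstacle here: all the substance of the corollary lies in Theorem~\ref{thm:main}, and the only minor care needed is in the boundary overlap of translates of $F$, which is handled by using a Dirichlet fundamental domain (or choosing a measurable section) so that $\ga_x$ is unambiguously defined.
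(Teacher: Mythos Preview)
Your proposal is correct and follows exactly the route the paper intends: the corollary is stated in the paper with no proof beyond ``immediate from Theorem~\ref{thm:main}'', and your argument spells out precisely the standard reduction via \v{S}varc--Milnor and the orbit map that the introduction already sketches. One minor simplification: since both metrics live on the same underlying $\widetilde M$, the identity map itself is already the desired $\Ga$-equivariant $(1,C')$-quasi-isometry once you know $|d_1-d_2|\le C$ on the orbit, so the fundamental-domain construction is not strictly needed.
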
	

\bibliographystyle{alpha}
\bibliography{MLS}

\end{document}